\providecommand{\U}[1]{\protect\rule{.1in}{.1in}}
\providecommand{\U}[1]{\protect\rule{.1in}{.1in}}
\newtheorem{theorem}{Theorem}[section]
\newtheorem{corollary}[theorem]{Corollary}
\newtheorem{definition}[theorem]{Definition}
\newtheorem{example}[theorem]{Example}
\newtheorem{lemma}[theorem]{Lemma}
\newtheorem{proposition}[theorem]{Proposition}
\newenvironment{proof}[1][Proof]{\noindent\textbf{#1.} }{\ \rule{0.5em}{0.5em}}
\begin{document}

\author{Yulia Kempner\\Department of Computer Science\\Holon Institute of Technology, Israel\\yuliak@hit.ac.il
\and Vadim E. Levit\\Department of Computer Science \\Ariel University, Israel\\levitv@ariel.ac.il}
\date{}
\title{Violator spaces vs closure spaces }
\maketitle

\begin{abstract}
Violator Spaces were introduced by J. Matou\v{s}ek et al. in 2008 as
generalization of Linear Programming problems. Convex geometries were invented
by Edelman and Jamison in 1985 as proper combinatorial abstractions of
convexity. Convex geometries are defined by anti-exchange closure operators.
We investigate an interrelations between violator spaces and closure spaces
and show that violator mapping may be defined by a week version of closure
operators. Moreover, we prove that violator spaces with an unique basis
satisfies the anti-exchange and the Krein-Milman properties.

\textbf{Keywords: } violator space,closure space, convex geometry, antimatroid.

\end{abstract}

\section{Preliminaries}

The main goal of this paper is to make connections between two well -- but up
to now independently -- developed theories, the theory of violator spaces and
the theory of closure spaces.

LP-type problems have been introduced and analyzed by Matou\v{s}ek, Sharir and
Welzl \cite{MSW},\cite{SW} as a combinatorial framework that encompasses
linear programming and other geometric optimization problems. J. Matou\v{s}ek
et al. define a simpler framework: violator spaces, which constitute a proper
generalization of LP-type problems. Originally, violator spaces were defined
for set of constraints $H$, where with each subset of constraints $G \subseteq
H$ associates $V(G)$ - the set of all constraints violating G.

The classic example of an LP-type problem is the problem of computing the
smallest enclosing ball of a finite set of points in $R^{d}$. Here the set $H$
is a set of points in $R^{d}$, and the violated constraints of some subset of
the points $G$ are exactly the points lying outside the smallest enclosing
ball of $G$.

\begin{definition}
\cite{VS} A \textit{violator space} is a pair $(H,V)$, where $H$ is a finite
set and $V$ is a mapping $2^{H}\rightarrow2^{H}$ such that

Consistency: $G \cap V(G) = \emptyset$ holds for all $G \subseteq H$

Locality: For all $F\subseteq G \subseteq H$, where $G \cap V(F) = \emptyset$,
we have $V(G)=V(F)$.
\end{definition}

Convex geometries were invented by Edelman and Jamison in 1985 as proper
combinatorial abstractions of convexity. There are various ways to
characterize finite convex geometries. One of them defines convex sets by
anti-exchange closure operators. The convex hull operator on Euclidean space
$E^{n}$ is a classic example of a closure operator with anti-exchange property.

In this paper we consider the connection between the mapping $V$ of violator
spaces and closure operators. We show that the mapping $V$ may be defined by
week version of closure operator. Interrelations between violator spaces and
closure spaces gives a new insight on well known results in two theories.

In the paper we consider only finite sets. We will use $X\cup x$ for
$X\cup\{x\}$, and $X-x$ for $X-\{x\}$.

\begin{definition}
We say that $(E,\tau)$ is a \textit{closure space} if $\tau:2^{E}%
\rightarrow2^{E}$ is a \textit{closure operator } satisfying the closure axioms:

C1: $X \subseteq\tau(X)$

C2: $X \subseteq Y\Rightarrow\tau(X)\subseteq\tau(Y)$

C3: $\tau(\tau(X))=\tau(X)$.
\end{definition}

\begin{definition}
A set $A \subseteq E$ is a \textit{closed set } if $A = \tau(A)$.
\end{definition}

The family of closed sets $K=\{X \in E: X=\tau(X)\}$ is closed under
intersection. Indeed,

$X,Y \in K \Rightarrow X \cap Y \subseteq\tau(X \cap Y)\subseteq\tau(X)
\cap\tau(Y)= X \cap Y$.

Conversely, any set system (E,K) closed under intersection is a family of
closed sets of the closure operator ~$\tau_{K}(X)=\cap\{A \in K: X \subseteq A
\}$.

$(E,\tau)$ is a \textit{convex geometry} if it satisfies the
\textit{anti-exchange axiom}:
\[
p,q \notin\tau(X) \wedge p \in\tau(X \cup{q}) \rightarrow q \notin\tau(X
\cup{p})
\]

A closure space is \textit{unique generated}, if every closed set $X$ has a unique \textit{basis} - a minimal subset $B \subseteq X$ with closure $X=\tau(B)$. A well-known  characterization \cite{Greedoids, CL} of closure operators states equivalence between uniqueness of the basis, anti-exchange property and the Krein-Milman properties.
One of our main findings is that violator spaces have the same property.

The rest of our paper is organized as follows. In Section 2, we investigate
interrelations between violator and closure spaces. We prove that every
closure space is a violator space, describe the violator mapping as a week
closure operator, and give a definition of violator space in terms of closure
space. Based on subsequent weakening of a closure operator we introduce the new notion - \textit{convex space}.
Section 3 is devoted to violator spaces with an unique basis, and
expands the known theorem connecting between uniqueness of the basis with
anti-exchange property to violator spaces. In Section 4 we focus on the role
of extreme points - an important geometric aspect of convex sets. We prove
that uniquely generated violator spaces satisfy the Krein-Milman property and find the conditions for which this theorem holds for convex spaces too.

\section{Violator mapping and closure operator}

\begin{proposition}
\label{cltov}  Let $(E,\tau)$ be a closure space. Define $V(X)=E-\tau(X)$.
Then $(E,V)$ is a violator space.
\end{proposition}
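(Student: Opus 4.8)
The plan is to verify the two defining properties of a violator space directly from the closure axioms, given the definition $V(X)=E-\tau(X)$.

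First I would establish Consistency, namely $G\cap V(G)=\emptyset$ for all $G\subseteq H$. Since $V(G)=E-\tau(G)$, the intersection $G\cap V(G)$ consists of elements of $G$ that lie outside $\tau(G)$. By axiom C1 we have $G\subseteq\tau(G)$, so no element of $G$ can lie in the complement $E-\tau(G)$. Hence $G\cap V(G)=\emptyset$, and this step is essentially immediate.

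The substance of the proof is Locality: for all $F\subseteq G\subseteq H$ with $G\cap V(F)=\emptyset$, I must show $V(G)=V(F)$, equivalently $\tau(G)=\tau(F)$. The hypothesis $G\cap V(F)=\emptyset$ translates to $G\cap(E-\tau(F))=\emptyset$, i.e. $G\subseteq\tau(F)$. The plan is then a two-sided containment of closures. For one direction, from $F\subseteq G$ and monotonicity C2 I get $\tau(F)\subseteq\tau(G)$. For the reverse direction, from $G\subseteq\tau(F)$ I apply C2 to obtain $\tau(G)\subseteq\tau(\tau(F))$, and then idempotence C3 collapses $\tau(\tau(F))=\tau(F)$, yielding $\tau(G)\subseteq\tau(F)$. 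Combining the two containments gives $\tau(G)=\tau(F)$, and taking complements in $E$ gives $V(G)=V(F)$.

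The main obstacle, such as it is, is simply translating the set-complement condition $G\cap V(F)=\emptyset$ into the containment $G\subseteq\tau(F)$ so that the closure axioms can be applied; once that translation is made, the argument is a clean application of C1, C2, and C3. I would present the Consistency verification in a single line and then devote the bulk of the proof to the chain of inclusions establishing Locality, being careful to note explicitly where each of the three closure axioms is invoked.
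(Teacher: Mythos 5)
Your proposal is correct and follows essentially the same route as the paper's own proof: Consistency from C1, and Locality by translating $G\cap V(F)=\emptyset$ into $G\subseteq\tau(F)$ and then establishing $\tau(F)=\tau(G)$ via the two containments coming from C2 and C2+C3. No gaps; nothing to add.
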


\begin{proof}
Consistency: $G\subseteq\tau(G)$ (from C1), then $G \cap(E-\tau(G))=\emptyset$
, i.e., $G \cap V(G) = \emptyset$.

Locality: For all $F\subseteq G \subseteq E$,
\[
G \cap V(F) = \emptyset\Leftrightarrow G \cap(E-\tau(F)) = \emptyset
\Leftrightarrow G \subseteq\tau(F)
\]

From C2 and C3: $G \subseteq\tau(F) \Rightarrow\tau(G)\subseteq\tau
(\tau(F))=\tau(F)$.

From another side: $F \subseteq G \Rightarrow\tau(F)\subseteq\tau(G)$ (based
on C2).

Thus, $\tau(G)=\tau(F) \Leftrightarrow V(G)=V(F)$.
\end{proof}

What about the opposite direction?

\begin{proposition}
\label{vtocl} Let $(H,V)$ be a violator space. Define $\tau(X)=H-V(X)$. Then
the operator $\tau$ satisfies two closure axioms: C1 and C3.
\end{proposition}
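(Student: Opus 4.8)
The plan is to read off each closure axiom directly from the two violator-space axioms, translating between $\tau$ and $V$ through the complementation $\tau(X)=H-V(X)$ (equivalently $V(X)=H-\tau(X)$). Everything reduces to rewriting set-complement identities and applying Consistency and Locality at the right arguments.

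First I would dispatch C1. The containment $X\subseteq\tau(X)$ means $X\subseteq H-V(X)$, which is the same as $X\cap V(X)=\emptyset$; this is precisely the Consistency axiom, so C1 holds with no further work.

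The substantive step is C3, for which I would prove the equivalent statement $V(\tau(X))=V(X)$, since applying the complement then gives $\tau(\tau(X))=H-V(\tau(X))=H-V(X)=\tau(X)$. The idea is to invoke Locality with the choice $F=X$ and $G=\tau(X)=H-V(X)$. I must check the three hypotheses of Locality: (i) $F\subseteq G$, i.e.\ $X\subseteq\tau(X)$, which is exactly the C1 just established (equivalently Consistency); (ii) $G\subseteq H$, which is immediate since $G=H-V(X)$; and (iii) $G\cap V(F)=\emptyset$, i.e.\ $(H-V(X))\cap V(X)=\emptyset$, which is a tautology. Locality then yields $V(G)=V(F)$, that is $V(\tau(X))=V(X)$, completing C3.

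The main thing to watch for is that one should not expect---and must not attempt to prove---monotonicity C2 here; that is exactly why the statement claims only C1 and C3, and it is the point distinguishing the violator mapping as merely a \emph{weak} closure operator. The only genuine insight required is selecting $F=X$ and $G=\tau(X)$ so that the empty-intersection hypothesis of Locality collapses to a triviality, after which the whole argument is a routine translation between the language of $V$ and the language of $\tau$.
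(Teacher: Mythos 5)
Your proposal is correct and follows exactly the paper's argument: C1 is the direct complement-translation of Consistency, and C3 comes from applying Locality with $F=X$ and $G=\tau(X)$, using $X\subseteq\tau(X)$ and $\tau(X)\cap V(X)=\emptyset$ to get $V(\tau(X))=V(X)$. Your version merely spells out the hypothesis checks more explicitly.
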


\begin{proof}
Consistency is equivalent to C1:
\[
X \cap V(X) = \emptyset\Leftrightarrow X \cap(H-\tau(X)) = \emptyset
\Leftrightarrow X \subseteq\tau(X)
\]
Prove C3: Since $X \subseteq\tau(X)$ and $\tau(X) \cap V(X) = \emptyset$,
then, from locality, we have $V(\tau(X))=V(X) \Leftrightarrow\tau
(\tau(X))=\tau(X)$.
\end{proof}

There is an example of violator spaces (see \cite{VS}, p.2130) where $F
\subseteq G$ and $V(G)$ is not a subset of $V(F)$, i.e., the axiom C2 is not
hold. Another simple example is as follows.

\begin{example}
\label{ex1} Let $H=\{1,2,3\}$. Define $V(X)=H-X$ for each $X \subseteq H$
except $V(\{1\})=\{2\}$. It's easy to check that $(H,V)$ is a
violator space, but while $\{1\} \subseteq\{1,2\}$, $V(\{1,2\})=\{3\}
\nsubseteq V(\{1\})=\{2\}$.
\end{example}

Note, that the locality of violator spaces is equivalent to

$C22: (F \subseteq G \subseteq\tau(F)) \Rightarrow\tau(G)=\tau(F). $

Thus we have equivalent definition of violator spaces.

\begin{definition}
We say that $(H,\tau)$ is a violator space if $\tau:2^{H}\rightarrow2^{H}$
satisfies the axioms: C1,C22.
\end{definition}

Consider the relation between axioms.

\begin{proposition}
The axioms C2 and C3 implies C22.
\end{proposition}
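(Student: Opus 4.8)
The plan is to establish the equality $\tau(G)=\tau(F)$ demanded by C22 by proving the two inclusions $\tau(F)\subseteq\tau(G)$ and $\tau(G)\subseteq\tau(F)$ separately, feeding each half of the hypothesis $F\subseteq G\subseteq\tau(F)$ into monotonicity (C2), and using idempotence (C3) only once, to collapse a double application of $\tau$. First I would fix $F$ and $G$ satisfying the antecedent of C22, so that both $F\subseteq G$ and $G\subseteq\tau(F)$ are available as separate inclusions to exploit.

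For the inclusion $\tau(F)\subseteq\tau(G)$ I would simply apply C2 to $F\subseteq G$. For the reverse inclusion I would apply C2 to the other half of the hypothesis, $G\subseteq\tau(F)$, obtaining $\tau(G)\subseteq\tau(\tau(F))$, and then invoke C3 to rewrite $\tau(\tau(F))=\tau(F)$, which yields $\tau(G)\subseteq\tau(F)$. Putting the two inclusions together gives $\tau(G)=\tau(F)$, exactly the conclusion of C22. Thus C22 follows from C2 and C3 alone.

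There is essentially no obstacle here: the argument is a sandwiching of $\tau(G)$ between $\tau(F)$ and $\tau(F)$. The only points requiring care are that the two hypotheses $F\subseteq G$ and $G\subseteq\tau(F)$ must be fed into monotonicity in the correct directions (one to bound $\tau(G)$ from below, the other from above), and that C3 is applied to the closed-under-$\tau$ set $\tau(F)$, not to $G$, since it is precisely the nesting $G\subseteq\tau(F)$ that lets the outer closure in $\tau(\tau(F))$ be absorbed. I would note in passing that this proposition records the expected fact that the genuine closure axioms are strictly stronger than the weakened locality axiom C22, consistent with Example~\ref{ex1}, where C22 holds but C2 fails.
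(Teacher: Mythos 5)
Your argument is exactly the paper's own proof: one inclusion from C2 applied to $F\subseteq G$, the reverse from C2 applied to $G\subseteq\tau(F)$ followed by C3 to absorb the double closure. It is correct and takes essentially the same approach, so there is nothing to add.
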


The proposition follows from Proposition \ref{cltov}, but here we give another
proof, that doesn't use the definition of violator spaces.

\begin{proof}
Let $X \subseteq Y \subseteq\tau(X) $. Then, from C2, $\tau(X) \subseteq
\tau(Y)$. From another side, by C2 and C3:

$Y \subseteq\tau(X) \Rightarrow\tau(Y) \subseteq\tau(\tau(X))=\tau(X)$

Thus, $\tau(X) = \tau(Y)$ .
\end{proof}

\begin{proposition}
The axioms C1 and C22 implies C3.
\end{proposition}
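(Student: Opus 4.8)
The plan is to derive C3 directly from C22 by a single well-chosen instantiation, rather than by building up any auxiliary machinery. Fix an arbitrary $X \subseteq H$; the goal is to show $\tau(\tau(X)) = \tau(X)$. The natural move is to apply C22 with the specific choice $F = X$ and $G = \tau(X)$, since C22 has exactly the shape of an identity between closures $\tau(G) = \tau(F)$, and with this substitution its right-hand side becomes $\tau(\tau(X)) = \tau(X)$, which is precisely C3.

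First I would verify that this choice of $F$ and $G$ satisfies the hypothesis of C22, namely the chain $F \subseteq G \subseteq \tau(F)$. With $F = X$ and $G = \tau(X)$ this chain reads $X \subseteq \tau(X) \subseteq \tau(X)$. The first inclusion $X \subseteq \tau(X)$ is exactly C1, and the second inclusion $\tau(X) \subseteq \tau(X)$ holds trivially by reflexivity of set inclusion. Hence the premise of C22 is met.

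Invoking C22 then yields $\tau(G) = \tau(F)$, that is, $\tau(\tau(X)) = \tau(X)$. Since $X$ was arbitrary, this establishes C3 for every subset of $H$, completing the argument.

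I do not expect any genuine obstacle here: the entire content of the proof is recognizing that C22, when specialized to $G = \tau(F)$, collapses into the idempotence statement C3, and that C1 is exactly what is needed to license this specialization. The only point requiring a moment's care is confirming that the degenerate upper endpoint $G = \tau(F)$ still legitimately satisfies the inclusion $G \subseteq \tau(F)$, which it does as an equality.
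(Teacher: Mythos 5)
Your proof is correct and is essentially identical to the paper's: both instantiate C22 with $F=X$, $G=\tau(X)$, using C1 to verify the chain $X\subseteq\tau(X)\subseteq\tau(X)$, which immediately yields $\tau(\tau(X))=\tau(X)$.
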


The proof is identical to proof of Proposition \ref{vtocl}:

\begin{proof}
Since, from C1, $X \subseteq\tau(X) \subseteq\tau(X)$, C22 implies $\tau(\tau
(X))=\tau(X)$
\end{proof}

Thus we can see that while closure spaces satisfy the closure axioms C1, C2, and C3, violator spaces  satisfy the axioms C1, C22 and C3,
and so may be considered as \textit{week} closure spaces.

Any violator space $(H,V)$ satisfies \textit{monotonicity} (\cite{VS}, Lemma
17) defined as follows:

$V(F)=V(G) \Rightarrow V(E)=V(F)=V(G)$ for all sets $F \subseteq E \subseteq G
\subseteq H $

Monotonicity immediately follows from consistency and locality.

Rewrite the definition of monotonicity in terms of operator $\tau$:
\begin{equation}
Monotonicity: X \subseteq Y \subseteq Z \wedge\tau(X)=\tau(Z) \Rightarrow
\tau(Y)=\tau(X) = \tau(Z) \label{convex}
\end{equation}

 Since the property holds for each set lying between two sets, in the future we, following \cite{Monjardet}, call the operator satisfying the property (\ref{convex}) \textit{convex operator}.

Consider the relationship between axiom C22 and convexity. Axioms C1 and
C22 imply convexity as follows from the proof in (\cite{VS}, Lemma 17).
Indeed, the axiom C1 yields $Y \subseteq Z \Rightarrow Y \subseteq\tau
(Z)=\tau(X)$. Then we have from C22: $X \subseteq Y \subseteq\tau(X)
\Rightarrow\tau(Y)=\tau(X) = \tau(Z)$.

\begin{proposition}
Convexity and axiom C3 imply C22.
\end{proposition}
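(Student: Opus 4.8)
The plan is to prove C22 directly from convexity together with C3, mirroring the logical structure already established for the other implications. We are given the axioms: Convexity (property (\ref{convex})): for all $X \subseteq Y \subseteq Z$, if $\tau(X)=\tau(Z)$ then $\tau(Y)=\tau(X)=\tau(Z)$; and C3: $\tau(\tau(X))=\tau(X)$. We want to derive C22: if $F \subseteq G \subseteq \tau(F)$, then $\tau(G)=\tau(F)$.

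First I would set up the inclusion chain needed to apply convexity. The hypothesis of C22 gives us $F \subseteq G \subseteq \tau(F)$. The natural move is to take $X=F$, $Y=G$, and $Z=\tau(F)$, so that the inclusion $X \subseteq Y \subseteq Z$ is exactly the hypothesis $F \subseteq G \subseteq \tau(F)$. To invoke convexity I need the equality $\tau(X)=\tau(Z)$, i.e., $\tau(F)=\tau(\tau(F))$. But this is precisely C3. So C3 supplies the equality $\tau(F)=\tau(\tau(F))$ that convexity requires at its endpoints.

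Then I would apply convexity to the chain $F \subseteq G \subseteq \tau(F)$ with the endpoint equality $\tau(F)=\tau(\tau(F))$ in hand. Convexity concludes that the closure of the intermediate set equals the common endpoint value: $\tau(G)=\tau(F)=\tau(\tau(F))$. In particular $\tau(G)=\tau(F)$, which is exactly C22. This completes the argument.

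The proof is essentially a matter of correctly aligning the variables, so there is no serious obstacle. The only subtle point to verify is that the endpoint $Z=\tau(F)$ legitimately satisfies $\tau(Z)=\tau(F)$, and this is guaranteed by C3 rather than assumed; one should make sure C3 is stated as applying to arbitrary sets so that $\tau(\tau(F))=\tau(F)$ holds without additional hypotheses. Given that, the implication follows in one application of convexity.
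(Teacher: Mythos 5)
Your proof is correct and is essentially the same as the paper's: both apply convexity to the chain $F \subseteq G \subseteq \tau(F)$, using C3 to supply the endpoint equality $\tau(F)=\tau(\tau(F))$. You have merely spelled out the variable alignment that the paper leaves implicit.
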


\begin{proof}
If $F \subseteq G \subseteq\tau(F)$, then convexity with C3 ($\tau
(F)=\tau(\tau(F)$)implies $\tau(G)=\tau(F)$.
\end{proof}

So we can give another equivalent definition of violator spaces:

\begin{definition}
$(H,\tau)$ is a violator space if $\tau:2^{H}\rightarrow2^{H}$ satisfies the
convexity and the closure axioms: C1 and C3.
\end{definition}

The following example shows that the convexity property with axiom C1 do
not obligate the space to be violator spaces.

\begin{example}
Let $H=\{1,2,3\}$. Define $\tau(X)=X$ for each $X \subseteq H$ except
$\tau(\{1\})=\{1,2\}$, and $\tau(\{1,2\})=\{1,2,3\}$. It's easy to check that
$(H,\tau)$ satisfies the properties C1 and convexity, but while $\{1\}
\subseteq\{1,2\} \subseteq\tau(\{1\})$, $\tau(\{1,2\})=\{1,2,3\}\neq
\tau(\{1\})$ and $\tau(\tau(\{1\})) \neq\tau(\{1\})$.
\end{example}

There is the space satisfying C1
and the convexity, but it doesn't satisfy C22 (and not C3), and so it is not a violator space.
We call such space the \textit{convex space}.
\begin{definition}
$(H,\tau)$ is an convex space if $\tau:2^{H}\rightarrow2^{H}$ satisfies the
convexity and axiom C1.
\end{definition}

\section{Uniquely generated spaces and anti-exchange property}

Here and in the future we will suppose that we have a finite space $(E,\tau)$ - the pair of set $E$ and
operator $\tau:2^{E}\rightarrow2^{E}$.

\begin{definition}
We say that $B \subseteq E$ is a \textit{generator} (known also as a spanning
set) of $X \subseteq E$ if $\tau(B)=\tau(X)$. For $X\subseteq E$, a \textit{
minimal generator or basis of X} is a minimal subset $B$ with $\tau
(B)=\tau(X)$.
\end{definition}

\begin{definition}
A space $(E,\tau)$ is \textit{uniquely generated} if every set $X\subseteq E$
has a unique basis.
\end{definition}

Note that we do not demand from generators, and so from bases, of any set do
be a subset of the set. The situation is changed when a space is uniquely generated.

\begin{proposition}
\label{U1}  A space $(E,\tau)$ is uniquely generated if and only if a basis of
each set $X$ is contained in all generators of $X$.
\end{proposition}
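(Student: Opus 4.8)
The plan is to prove the two implications separately, using only the definitions of \emph{generator} and \emph{basis} together with the finiteness of $E$; notably, no closure axiom is needed here.

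For the backward implication I would assume that for every $X$ some basis $B$ of $X$ is contained in every generator of $X$, and deduce uniqueness. Let $B_1$ be an arbitrary basis of $X$. Since $\tau(B_1)=\tau(X)$, the set $B_1$ is a generator of $X$, so by hypothesis $B\subseteq B_1$. But $B$ is itself a generator of $X$ (any basis is in particular a generator, as $\tau(B)=\tau(X)$), so $B\subseteq B_1$ together with the minimality of $B_1$ forces $B=B_1$. As $B_1$ was arbitrary, every basis of $X$ equals $B$, which is exactly uniqueness.

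For the forward implication I would assume $(E,\tau)$ uniquely generated, let $B$ be the unique basis of $X$, and fix any generator $G$ of $X$, so that $\tau(G)=\tau(X)$; the goal is $B\subseteq G$. The strategy is to produce a basis of $X$ that is visibly a subset of $G$ and then invoke uniqueness. By finiteness I would choose $B'\subseteq G$ minimal under inclusion among those subsets of $G$ whose closure equals $\tau(G)$; at least one such subset exists, namely $G$ itself.

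The crux is to promote this ``local'' minimality inside $G$ to genuine minimality among all subsets of $E$, and this is the step I expect to be the main obstacle, precisely because bases are not required to lie inside the set they generate. It resolves once one observes that every proper subset $T\subsetneq B'$ automatically satisfies $T\subseteq G$; hence if $\tau(T)=\tau(G)$ it would contradict the minimality of $B'$ inside $G$. Therefore no proper subset of $B'$ has closure $\tau(G)$, so $B'$ is a basis. Since $\tau(B')=\tau(G)=\tau(X)$, the set $B'$ is a basis of $X$, whence uniqueness gives $B'=B$; as $B'\subseteq G$, I conclude $B\subseteq G$, finishing the proof.
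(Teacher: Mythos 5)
Your proof is correct and follows essentially the same route as the paper's: in the forward direction both arguments extract a minimal generator $B'$ inside the given generator $G$ and identify it with the unique basis via uniqueness, and in the backward direction both derive equality of bases from containment plus minimality. You are in fact slightly more explicit than the paper on the one delicate point, namely why minimality of $B'$ \emph{within} $G$ already yields minimality among all subsets of $E$ (every proper subset of $B'$ is again a subset of $G$), a step the paper's proof passes over silently.
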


\begin{proof}
Let $B$ be a basis of $X$. Then we have to prove that $(E,\tau)$ is uniquely
generated if and only if
\begin{equation}
B \subseteq\bigcap\{Y: \tau(Y)= \tau(X)\}\label{UQA}%
\end{equation}
1. $\Rightarrow$ Let $Y \subseteq E$, $\tau(X)=\tau(Y)$ and $B$ is a basis of
$X$. $X$ is a generator of $\tau(X)$. If $X$ is not a basis of $\tau(X)$ ,
then there is a minimal set $B_{X}$ contained in $X $ such that $\tau
(B_{X})=\tau(X)$. By analogy, there is a minimal subset $B_{Y} \subseteq Y $
such that $\tau(B_{Y})=\tau(Y)=\tau(X)$. Since the space is a uniquely
generated, $B_{X}=B_{Y}=B$, and so $B \subseteq Y$. The proof is correct for each
generator of $X$, so the inclusion (\ref{UQA}) holds for any uniquely
generated space.

2. $\Leftarrow$ Suppose there are two bases $B_{1} \neq B_{2}$ of a set $X$.
Then $\tau(X)=\tau(B_{2})$, and so from (\ref{UQA}) $B_{1}\subseteq B_{2}$. By
analogy, $B_{2}\subseteq B_{1}$. Thus, $B_{1} = B_{2}$.
\end{proof}

Since each set is a generator of itself, we have the following property.

\begin{corollary}
If $(E,\tau)$ is uniquely generated then each basis $B$ of $X\subseteq E$ is a
subset of $X$.
\end{corollary}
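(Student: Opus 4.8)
The plan is to obtain the corollary as an immediate consequence of Proposition \ref{U1}, so almost all the work has already been done. The one observation that makes everything go through is that \emph{every set is a generator of itself}: in the definition of generator, taking $B = X$ gives the trivial identity $\tau(X) = \tau(X)$, so $X$ itself lies in the family $\{Y : \tau(Y) = \tau(X)\}$ of all generators of $X$.

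First I would recall that Proposition \ref{U1} guarantees, in a uniquely generated space, that the (unique) basis $B$ of a set $X$ is contained in every generator of $X$; equivalently, $B \subseteq \bigcap\{Y : \tau(Y) = \tau(X)\}$. Then, since $X$ is one of these generators by the observation above, the intersection on the right-hand side is itself a subset of $X$. Chaining the two inclusions yields $B \subseteq X$, which is exactly the claim.

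The only point worth flagging is conceptual rather than technical, and it is precisely the subtlety raised in the remark preceding the corollary: in a general space a basis of $X$ need not be a subset of $X$, since generators were deliberately not required to sit inside the set they generate. Thus the subset conclusion genuinely relies on the uniqueness hypothesis, which enters through Proposition \ref{U1}. There is no real obstacle in the argument; its entire content is the self-generation remark combined with the already-established proposition.
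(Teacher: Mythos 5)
Your argument is correct and is exactly the paper's: the paper derives the corollary from Proposition \ref{U1} via the one-line observation ``since each set is a generator of itself,'' which is precisely your key step. Nothing further is needed.
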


To characterize an uniquely generated violator space we will use the unique
generation property from \cite{CL}.

\begin{proposition}
\label{UQP} An convex space $(E,\tau)$ is uniquely generated if and only if
for every $X,Y\subseteq E$
\begin{equation}
\tau(X)=\tau(Y)\Rightarrow\tau(X\cap Y)=\tau(X)=\tau(Y)\label{UQ}%
\end{equation}

\end{proposition}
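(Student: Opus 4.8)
The plan is to prove the two implications separately, relying on three ingredients already available: the characterization of unique generation in Proposition \ref{U1}, its corollary that in a uniquely generated space every basis of a set is contained in that set, and the convexity axiom $X\subseteq Y\subseteq Z\wedge\tau(X)=\tau(Z)\Rightarrow\tau(Y)=\tau(X)=\tau(Z)$. Throughout I would keep in mind that, $E$ being finite, every set has at least one basis, and that $B$ is a basis of $X$ exactly when $\tau(B)=\tau(X)$ while no proper subset of $B$ has this property.

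For the implication $(\Leftarrow)$ I would argue by contradiction. Assuming condition (\ref{UQ}), suppose some $X$ admits two distinct bases $B_{1}\neq B_{2}$. Both are generators, so $\tau(B_{1})=\tau(X)=\tau(B_{2})$, and applying (\ref{UQ}) to the pair $B_{1},B_{2}$ yields $\tau(B_{1}\cap B_{2})=\tau(B_{1})=\tau(B_{2})$. Hence $B_{1}\cap B_{2}$ is again a generator of $X$. Because $B_{1}\cap B_{2}\subseteq B_{1}$ and $B_{1}$ is a minimal generator, we must have $B_{1}\cap B_{2}=B_{1}$, i.e.\ $B_{1}\subseteq B_{2}$; symmetrically $B_{2}\subseteq B_{1}$, forcing $B_{1}=B_{2}$, a contradiction. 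Thus the space is uniquely generated.

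For $(\Rightarrow)$, assume $(E,\tau)$ is uniquely generated and fix $X,Y$ with $\tau(X)=\tau(Y)$. Since a set $B$ generates $X$ precisely when $\tau(B)=\tau(X)=\tau(Y)$, the generators of $X$ and of $Y$ coincide, so $X$ and $Y$ share the same unique basis $B$. By the corollary to Proposition \ref{U1}, $B\subseteq X$ and $B\subseteq Y$, whence $B\subseteq X\cap Y$. I then have the chain $B\subseteq X\cap Y\subseteq X$ together with $\tau(B)=\tau(X)$, and applying convexity to this chain gives $\tau(X\cap Y)=\tau(B)=\tau(X)$; combined with $\tau(X)=\tau(Y)$ this is exactly (\ref{UQ}).

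The step I expect to require the most care is this application of convexity. The naive move---sandwiching $B\subseteq X\cap Y\subseteq\tau(B)$ and concluding $\tau(X\cap Y)=\tau(B)$---secretly needs the idempotence $\tau(\tau(B))=\tau(B)$, which is axiom C3 and is \emph{not} available in a convex space (such spaces satisfy only C1 and convexity). The remedy is to use $X$ itself, rather than $\tau(B)$, as the upper endpoint of the sandwich: then the hypothesis $\tau(B)=\tau(X)$ of convexity holds by the very definition of a basis, and no idempotence is invoked. Establishing that $B$ sits inside $X\cap Y$ via the corollary is precisely what makes this choice of endpoints legitimate, so the interplay between unique generation and containment of the basis is the crux of the argument.
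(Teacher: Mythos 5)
Your proof is correct and follows essentially the same route as the paper: the forward direction places the unique basis $B$ inside $X\cap Y$ via Proposition \ref{U1} and then applies convexity along $B\subseteq X\cap Y\subseteq X$, and the reverse direction derives a contradiction with the minimality of one of two supposed distinct bases from $\tau(B_1\cap B_2)=\tau(B_1)$. Your explicit remark that the convexity sandwich must use $X$ (not $\tau(B)$) as the upper endpoint, since C3 is unavailable in a convex space, is a correct and worthwhile clarification of a step the paper leaves implicit.
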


\begin{proof}
1. Let a convex space $(E,\tau)$ be uniquely generated. Then the Proposition
\ref{U1} implies that the basis $B$ of $X$ is a subset of $X \cap Y$. Then
(from convexity) $\tau(X \cap Y)=\tau(B) = \tau(X)$.

2. Suppose that there are two bases $B_{1} \neq B_{2}$ of a set $X$. Then by
(\ref{UQ}) $\tau(B_{1} \cap B_{2})=\tau(B_{1})$ in contradiction with
minimality of $B_{1}$.
\end{proof}

We can rewrite the property (\ref{UQ}) as follows: for every set $X \subseteq
E$ of uniquely generated convex space $(E,\tau)$, the basis $B$ of $X$ is an
intersection of all generators of $X$:
\begin{equation}
B = \bigcap\{Y: \tau(Y)= \tau(X)\}\label{UQI}%
\end{equation}
The future elaboration (development) of this formula will be shown (done) in
the next section.

An Example \ref{ex1} may be considered as an example of violator space that is
not uniquely generated ($\tau(\{1\}=\tau(\{3\})=\{1,3\}$). It is easy to see
that here the equation (\ref{UQI}) does not hold, and a basis $\{1\}$ of
$\{3\}$ is not contained in $\{3\}$.

It is known that a closure operator is uniquely generated if and only if it
satisfies the anti-exchange property (\cite{Edelman, Greedoids, CL}. We extend
this characterization to violator spaces.

\begin{theorem}
Let $(E,\tau)$ be a violator space. Then $(E,\tau)$ is uniquely generated if
and only if the operator $\tau$ satisfies the anti-exchange property.
\end{theorem}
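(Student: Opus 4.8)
The plan is to route both implications through Proposition~\ref{UQP}, exploiting that a violator space is in particular a convex space (it satisfies convexity and C1), so that being uniquely generated is equivalent to condition (\ref{UQ}): $\tau(X)=\tau(Y)\Rightarrow\tau(X\cap Y)=\tau(X)=\tau(Y)$. The single recurring obstacle throughout is the absence of monotonicity (C2): I can never pass directly from $S\subseteq T$ to $\tau(S)\subseteq\tau(T)$. Every containment-to-closure step must instead be funneled through C22, by sandwiching a set $G$ between some $F$ and $\tau(F)$ to conclude $\tau(G)=\tau(F)$. Maintaining this discipline is exactly what lets the classical argument survive the weaker violator axioms.

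For the forward direction (uniquely generated $\Rightarrow$ anti-exchange) I would argue by contradiction. Assuming (\ref{UQ}), take $p,q\notin\tau(X)$ with $p\in\tau(X\cup q)$ and suppose toward a contradiction that $q\in\tau(X\cup p)$. First note $p\neq q$ and $p,q\notin X$ (both by C1, since $p,q\notin\tau(X)$). From $p\in\tau(X\cup q)$ I get $X\cup q\subseteq X\cup\{p,q\}\subseteq\tau(X\cup q)$, so C22 gives $\tau(X\cup\{p,q\})=\tau(X\cup q)$; symmetrically $q\in\tau(X\cup p)$ yields $\tau(X\cup\{p,q\})=\tau(X\cup p)$. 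Hence $\tau(X\cup p)=\tau(X\cup q)$, and (\ref{UQ}) applied to these two sets gives $\tau\big((X\cup p)\cap(X\cup q)\big)=\tau(X\cup p)$. Since $p\neq q$ and $p,q\notin X$, that intersection is exactly $X$, so $\tau(X)=\tau(X\cup p)\ni p$, contradicting $p\notin\tau(X)$.

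For the converse (anti-exchange $\Rightarrow$ uniquely generated) I would assume a failure of unique generation and manufacture a violation of anti-exchange. Take a set with two distinct bases $B_1\neq B_2$, $A:=\tau(B_1)=\tau(B_2)$; by minimality neither contains the other, so fix $p\in B_1\setminus B_2$. Minimality of $B_1$ forces $p\notin\tau(B_1-p)$, since otherwise $B_1-p\subseteq B_1\subseteq\tau(B_1-p)$ and C22 would collapse $\tau(B_1-p)$ to $A$. Now enlarge $B_1-p$ by adjoining the elements $q_1,\dots,q_k$ of $B_2\setminus B_1$ one at a time, forming a chain $S_0=B_1-p\subseteq S_1\subseteq\cdots\subseteq S_k=(B_1-p)\cup(B_2\setminus B_1)$. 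Because $B_2\subseteq S_k\subseteq A=\tau(B_2)$, C22 gives $\tau(S_k)=A\ni p$, while $p\notin\tau(S_0)$; so there is a first index $j$ with $p\in\tau(S_j)$, whence $p\notin\tau(S_{j-1})$ and $p\in\tau(S_{j-1}\cup q_j)$.

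The crux is verifying that $Z:=S_{j-1}$ together with the pair $p,q_j$ meets the anti-exchange hypothesis. Clearly $p\notin\tau(Z)$; and $q_j\notin\tau(Z)$, for otherwise $Z\subseteq S_j=Z\cup q_j\subseteq\tau(Z)$ and C22 would force $\tau(S_j)=\tau(Z)=\tau(S_{j-1})$, contradicting $p\in\tau(S_j)\setminus\tau(S_{j-1})$. Anti-exchange then yields $q_j\notin\tau(Z\cup p)$. But $Z\cup p=B_1\cup\{q_1,\dots,q_{j-1}\}$ satisfies $B_1\subseteq Z\cup p\subseteq A=\tau(B_1)$, so a final application of C22 gives $\tau(Z\cup p)=A$, which contains $q_j$ — the desired contradiction, forcing $B_1=B_2$. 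I expect this converse, and in particular the bookkeeping required to confirm both anti-exchange premises $p,q_j\notin\tau(Z)$ purely through C22 rather than monotonicity, to be the main difficulty.
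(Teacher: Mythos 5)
Your proposal is correct and follows essentially the same route as the paper: the forward direction is the paper's argument verbatim (sandwich $X\cup p\cup q$ via C22, then apply Proposition~\ref{UQP} to $\tau(X\cup p)=\tau(X\cup q)$), and your converse builds the same anti-exchange-violating configuration $(Z,p,q)$ with all closure equalities forced through C22. The only cosmetic difference is that you locate $q$ by adding elements of $B_2\setminus B_1$ one at a time and taking the first index where $p$ enters the closure, whereas the paper takes a minimal $C\subseteq Y$ with $\tau((B_X-p)\cup C)=\tau(X)$ and removes one element --- the same extremal principle in two guises.
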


At first we prove the following lemma:

\begin{lemma}
Let $(E,\tau)$ be a violator space. Then for each $A \subseteq E$ holds:
\[
x \notin\tau(A) \Leftrightarrow\tau(A) \neq\tau(A \cup x)
\]
\label{Lemma}
\end{lemma}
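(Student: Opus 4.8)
The plan is to prove the two implications separately, each by contraposition, relying on the characterization of a violator space through the axioms C1 and C22 established earlier in the excerpt (recall C1: $X\subseteq\tau(X)$, and C22: $F\subseteq G\subseteq\tau(F)\Rightarrow\tau(G)=\tau(F)$). The two directions turn out to be asymmetric: one uses only extensivity (C1), while the other is exactly where the locality axiom C22 does its work.

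First I would handle the implication $\tau(A)=\tau(A\cup x)\Rightarrow x\in\tau(A)$, which is the contrapositive of the forward direction $x\notin\tau(A)\Rightarrow\tau(A)\neq\tau(A\cup x)$. This is immediate from C1 alone: since $x\in A\cup x\subseteq\tau(A\cup x)$, and by hypothesis $\tau(A\cup x)=\tau(A)$, we conclude $x\in\tau(A)$.

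Next I would establish $x\in\tau(A)\Rightarrow\tau(A)=\tau(A\cup x)$, the contrapositive of the backward direction $\tau(A)\neq\tau(A\cup x)\Rightarrow x\notin\tau(A)$. Assuming $x\in\tau(A)$, I combine this with C1 (which gives $A\subseteq\tau(A)$) to form the inclusion chain $A\subseteq A\cup x\subseteq\tau(A)$. Applying C22 with $F=A$ and $G=A\cup x$, the hypothesis $F\subseteq G\subseteq\tau(F)$ is precisely this chain, so C22 yields $\tau(A\cup x)=\tau(A)$.

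I do not expect a genuine obstacle here; the only point worth noticing is that the entire content of the backward implication is packaged in the single application of C22 to the chain $A\subseteq A\cup x\subseteq\tau(A)$, whereas the forward implication needs nothing beyond extensivity. Combining the two contrapositives then gives the stated equivalence $x\notin\tau(A)\Leftrightarrow\tau(A)\neq\tau(A\cup x)$.
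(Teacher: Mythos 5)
Your proposal is correct and matches the paper's proof essentially verbatim: the forward direction is the same C1 observation ($x\in\tau(A\cup x)$ but $x\notin\tau(A)$ forces the closures to differ), and the backward direction is the same single application of C22 to the chain $A\subseteq A\cup x\subseteq\tau(A)$. The only cosmetic difference is that you phrase both directions as contrapositives, which changes nothing of substance.
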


\begin{proof}
1. $\Rightarrow$ If $x \notin\tau(A)$, but, from C1,$x \in\tau(A \cup x)$,
then $\tau(A) \neq\tau(A \cup x)$. 2. $\Leftarrow$ If $x \in\tau(A)$, then $A
\subseteq A\cup x \subseteq\tau(A)$. Hence, from C22, $\tau(A) = \tau(A
\cup x)$.
\end{proof}

Now, prove the Theorem.

\begin{proof}
1. Unique generation implies anti-exchange property. Suppose there are $p,q
\notin\tau(X)$ with $p \in\tau(X \cup q)$ and $q \in\tau(X \cup p)$. Then (by
using C1) $X \cup p \subseteq X \cup p \cup q \subseteq\tau(X \cup p)$. Then
C22 yields $\tau(X \cup p \cup q)=\tau(X \cup p)$. By analogy, we have $\tau(X
\cup p \cup q)=\tau(X \cup q)$. Then from Proposition \ref{UQP} $\tau
(X)=\tau(X \cup p)$ implying $p \in\tau(X)$, a contradiction.

2. Anti-exchange property implies unique generation. Let $\tau(X)=\tau(Y)$,
and let $B_{X}$ be a minimal set contained in $X $ such that $\tau(B_{X}%
)=\tau(X)$. To prove that the space is uniquely generated enough to prove (by
the Proposition \ref{U1}) that $B_{X} \subseteq Y$. Suppose there are $p \in
B_{X}$ and $p \notin Y$. Since $B_{X}$ is a minimal generator (basis) of $X$,
$\tau(B_{X}-p) \neq\tau(X)$. Since (from C1) $Y \subseteq B_{X}-p \cup Y
\subseteq\tau(X)$ ,then from C22 $\tau(B_{X}-p \cup Y) =\tau(X)$. Let
$\emptyset\subset C \subseteq Y$ be a minimal set such that $\tau(B_{X}-p \cup
C)= \tau(X)$. Consider some element $q \in C$, and let $Z=B_{X}-p \cup C-q$.
From minimality of $C$ follows that $\tau(Z) \neq\tau(X)$.  Note that $\tau(Z
\cup p)=\tau(X)$, that follows from $B_{X} \subseteq Z \cup p \subseteq
\tau(X)$ and C22. Thus, $\tau(Z) \neq\tau(Z \cup p)$, and from the Lemma $p
\notin\tau(Z)$. By analogy, since  $\tau(Z \cup q)= \tau(B_{X}-p \cup C)=
\tau(X)$, $q \notin\tau(Z)$.  Now, $p,q \notin\tau(Z)$, but $p \in\tau(Z \cup
q)=\tau(X)$ and $q \in\tau(Z \cup p)=\tau(X)$, contradicting the anti-exchange
axiom. Consequently $B_{X}\subseteq Y$.
\end{proof}

Regarding this theorem we can ask two questions:

1. If the same theorem is right both for closure spaces (that for the case turn to be convex geometries) and for violator spaces, is each uniquely generated violator space is a closure space (convex geometry)?

2. Is the theorem right also for weaker case of violator space?

Both answers are negative. The Example \ref{ex1} shows that there is a uniquely generated violator space that does not satisfy the property C2, and so it is not a closure space.

The following example shows that for convex spaces the theorem is not correct.

\begin{example}
\label{exms}
Let $H=\{1,2,3,4\}$. Define $\tau(X)=X$ for each $X \subseteq H$ except
$\tau(\{1,2\})=\{1,2,3\}$, and $\tau(\{1,3\})=\tau(\{1,2,3\})=\tau(\{1,3,4\})=\tau(\{1,2,3,4\})=\{1,2,3,4\}$. It's easy to check that the space
$(H,\tau)$ is uniquely generated and satisfies the properties C1 and convexity. Let $X =\{1\}$. Then there are $p=2 \in \tau(\{1,3\})=\{1,2,3,4\},
q=3 \in \tau(\{1,2\})=\{1,2,3\}$. Thus the operator $\tau$ does not satisfy the anti-exchange property.
\end{example}

\section{Extreme points}

In the section we focus on an important geometric aspect of convex sets,
namely, on the role of extreme points.

We call an element $x$ of a subset $A \subseteq E$ an \textit{extreme point of
A } if $x \notin\tau(A - x)$. The set of extreme points of $X$ is denoted by $ex(X)$.

For violator spaces from Lemma \ref{Lemma} it follows $x \notin%
\tau(A - x) \Leftrightarrow\tau(A) \neq\tau(A - x)$. Thus we have
\begin{proposition}
\label{expoint}
For violator spaces: 
$x \in ex(X)\Leftrightarrow\tau(A) \neq\tau(A - x)$.

For convex spaces:
$x \in ex(X) \rightarrow \tau(A) \neq\tau(A - x)$.
 \end{proposition}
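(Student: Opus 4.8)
The plan is to reduce both claims to facts already on the table and to treat the statement's mixed notation ($X$ versus $A$) as a single set: since an extreme point is by definition an element of the set it is extreme in, I read $x \in A$ throughout and prove the assertions about $A$, writing $ex(A)$ for $ex(X)$.

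For the violator case I would simply instantiate Lemma \ref{Lemma} at the set $A - x$ in place of $A$. The lemma says $x \notin \tau(B) \Leftrightarrow \tau(B) \neq \tau(B \cup x)$ for every $B \subseteq E$; taking $B = A - x$ and using $x \in A$, so that $(A - x) \cup x = A$, this specializes to $x \notin \tau(A - x) \Leftrightarrow \tau(A - x) \neq \tau(A)$. By the definition of extreme point the left-hand side is precisely $x \in ex(A)$, so the equivalence $x \in ex(A) \Leftrightarrow \tau(A) \neq \tau(A - x)$ follows immediately. No computation is needed beyond this substitution.

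For the convex case only the forward implication is asserted, and I would prove it from axiom C1 alone, arguing by contraposition. Suppose $\tau(A) = \tau(A - x)$. By C1, $A \subseteq \tau(A)$, and since $x \in A$ this gives $x \in \tau(A) = \tau(A - x)$, i.e. $x \notin ex(A)$. Contrapositively, $x \in ex(A)$ forces $\tau(A) \neq \tau(A - x)$, as claimed. Note that convexity is not even invoked here; C1 suffices.

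The one point worth flagging — and the reason the convex statement is merely an implication — is that the converse genuinely fails. Running the violator argument in reverse would require deducing $\tau(A) = \tau(A - x)$ from $x \in \tau(A - x)$; one does obtain the chain $A - x \subseteq A \subseteq \tau(A - x)$, but passing from this inclusion to the equality of closures needs C22 (equivalently, C3 together with convexity), which convex spaces are not assumed to satisfy. I expect this to be the only substantive remark accompanying an otherwise routine argument: the asymmetry between the two parts is not an artifact of the proof but reflects exactly the axiom distinguishing violator spaces from convex spaces.
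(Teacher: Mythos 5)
Your proof is correct and follows essentially the same route as the paper: the violator-space equivalence is exactly Lemma \ref{Lemma} applied to $A-x$ (using $x\in A$, so $(A-x)\cup x=A$), and the one-directional convex-space claim is the C1-only half of that lemma's proof, which you state in contrapositive form. Your closing remark that the failing converse is precisely the C22 step matches the paper's observation via Example \ref{exms}.
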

 
 The statement for convex spasec follows straightforward from the proof of the Lemma \ref{Lemma}. The oppositive direction
may be not  correct,as we can see in Example \ref{exms}: 
 $3$ is not an extreme point for $\{1,2,3\}$, since $ 3 \in \tau(\{1,2\})$, but $ \tau(\{1,2\}) \neq  \tau(\{1,2,3\})$.

In this section we suppose that all generators and, in particular, bases of
every set $X$ are contained in $X$. The following proposition, connecting
between extreme point and bases were proved in \cite{VS}. We extend it to all generators.

\begin{proposition}
\label{exp} Let $(E,\tau)$ be a violator space. Then $x$ is an extreme point
of $X \subseteq E$ if and only if $x$ is contained in every generator (and so
in every basis) of $X$.
\end{proposition}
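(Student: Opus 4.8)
The plan is to prove the biconditional in Proposition~\ref{exp} by establishing each direction separately, with the ``extreme point'' characterization from Proposition~\ref{expoint} as the main tool. Recall that in a violator space we have $x \in ex(X) \Leftrightarrow \tau(X) \neq \tau(X-x)$. Throughout I will use the standing assumption of this section that all generators of $X$ are contained in $X$.

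First I would prove the easier direction: if $x$ is contained in every generator of $X$, then $x$ is an extreme point. I argue by contraposition. Suppose $x$ is not an extreme point of $X$, i.e.\ $\tau(X-x)=\tau(X)$ by Proposition~\ref{expoint}. Then $X-x$ is itself a generator of $X$ that does not contain $x$, contradicting the hypothesis that $x$ lies in every generator. Hence $x$ must be extreme. This direction is short and uses only the definition of generator together with the extreme-point criterion.

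For the converse, I would assume $x \in ex(X)$ and show that $x$ belongs to every generator $B$ of $X$. Let $B$ be any generator, so $\tau(B)=\tau(X)$ and (by the standing assumption) $B \subseteq X$. Suppose for contradiction that $x \notin B$. Then $B \subseteq X-x$, and combined with $X-x \subseteq X \subseteq \tau(X)=\tau(B)$ we get $B \subseteq X-x \subseteq \tau(B)$. Applying C22 (locality) to this chain yields $\tau(X-x)=\tau(B)=\tau(X)$. But then Proposition~\ref{expoint} forces $x \notin ex(X)$, contradicting our assumption. Therefore $x \in B$, and since $B$ was an arbitrary generator, $x$ lies in every generator of $X$; in particular it lies in every basis.

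The step I expect to require the most care is the converse, specifically the invocation of C22 on the inclusion chain $B \subseteq X-x \subseteq \tau(B)$. The subtle point is that I must sandwich $X-x$ between the generator $B$ and its closure $\tau(B)=\tau(X)$; this is exactly where both the standing assumption $B \subseteq X$ and the identity $\tau(B)=\tau(X)$ are essential, since without $B \subseteq X$ the inclusion $B \subseteq X-x$ could fail even when $x \notin B$. Once the chain is correctly set up, C22 delivers $\tau(X-x)=\tau(X)$ immediately, and the rest follows from the extreme-point criterion of Proposition~\ref{expoint}.
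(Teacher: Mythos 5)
Your proposal is correct and takes essentially the same approach as the paper: the forward direction via the contrapositive (exhibiting $X-x$ as a generator omitting $x$), and the converse by sandwiching $X-x$ between a generator $B$ and its closure. The only cosmetic difference is that you invoke C22 on $B \subseteq X-x \subseteq \tau(B)$ where the paper invokes convexity on $B \subseteq X-x \subseteq X$; these are interchangeable in a violator space.
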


\begin{proof}
If $x$ is not an extreme point, then $\tau(X) = \tau(X - x)$. Then there is a
generator of $X-x$ not containing $x$.

Conversely, $x$ is an extreme point,and there is some generator $B \subseteq X$ not containing $x$,
then $B \subseteq X-x \subseteq X$. From convexity $\tau(X-x) = \tau(X)$. Contradiction.
\end{proof}

\begin{corollary}
If $(E,\tau)$ is a violator space, then
\begin{equation}
ex(X) = \bigcap\{B \subseteq X: \tau(B)= \tau(X)\}\label{expb},%
\end{equation}
if  $(E,\tau)$ is a convex space, then
\begin{equation}
ex(X) \subseteq \bigcap\{B \subseteq X: \tau(B)= \tau(X)\}\label{expc},%
\end{equation}
\end{corollary}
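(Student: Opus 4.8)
The plan is to read off both statements from the two preceding results. For the violator case the claim is nothing more than a set-theoretic reformulation of Proposition \ref{exp}, whereas for the convex case only the forward inclusion survives, and I would isolate precisely which half of the argument uses the full violator structure. Under the standing assumption of this section every generator of $X$ is contained in $X$, so the family of generators of $X$ is exactly $\{B \subseteq X : \tau(B) = \tau(X)\}$. Proposition \ref{exp} states that $x \in ex(X)$ if and only if $x$ belongs to every generator of $X$, and membership in every member of a family is membership in its intersection; hence $x \in ex(X)$ if and only if $x \in \bigcap\{B \subseteq X : \tau(B) = \tau(X)\}$, which is (\ref{expb}).

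Next I would prove the inclusion (\ref{expc}) for a convex space, which amounts to the single implication $x \in ex(X) \Rightarrow x$ lies in every generator $B \subseteq X$ of $X$. Arguing by contradiction, suppose $x \in ex(X)$ but some generator $B \subseteq X$ omits $x$. Then $B \subseteq X - x \subseteq X$ and $\tau(B) = \tau(X)$, so convexity forces $\tau(X - x) = \tau(B) = \tau(X)$. But the convex-space half of Proposition \ref{expoint} gives $\tau(X) \neq \tau(X - x)$ for an extreme point, a contradiction; hence $x$ lies in every such generator and (\ref{expc}) follows. Note that both ingredients here, convexity and the one-directional form of Proposition \ref{expoint}, are available in convex spaces, which is why this direction goes through unchanged.

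The one point that needs care, and the reason the convex case yields only an inclusion, is that the reverse containment requires the missing half of Proposition \ref{expoint}, namely $x \notin ex(X) \Rightarrow \tau(X) = \tau(X - x)$. This equivalence holds for violator spaces through Lemma \ref{Lemma} (which rests on C22 and C3), but fails for convex spaces, where $x \in \tau(X - x)$ need not propagate to $\tau(X - x) = \tau(X)$ for lack of idempotence. Accordingly, I would invoke Example \ref{exms} to exhibit this gap: there an element is non-extreme yet the corresponding subset is not a generator, so the inclusion in (\ref{expc}) is strict and equality cannot be expected in the convex setting.
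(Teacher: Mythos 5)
Your proposal is correct and follows essentially the route the paper intends: the violator-space equality (\ref{expb}) is just the set-theoretic restatement of Proposition \ref{exp}, and the convex-space inclusion (\ref{expc}) is obtained by observing that the ``conversely'' half of the proof of Proposition \ref{exp} uses only convexity together with the one-directional part of Proposition \ref{expoint}, both available in convex spaces, while the reverse containment needs Lemma \ref{Lemma} (hence C22), and Example \ref{exms} witnesses strictness. Your identification of exactly where the full violator structure is used matches the paper's own discussion.
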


Since each basis of $X$ is a basis of $\tau(X)$, for violator spaces we have

$ex(\tau(X)) = \bigcap\{B \subseteq X: \tau(B)= \tau(X)\}= ex(X)$.

In particular, $ex(\tau(X)) \subseteq X$.

Now we prove that uniquely generated violator spaces satisfy the Krein-Milman property.

\begin{theorem}
\label{tex}
Let $(E,\tau)$ be a violator space. Then $(E,\tau)$ is uniquely generated if
and only if for every set $X \subseteq E$ , $\tau(X) = \tau(ex(X))$.
\end{theorem}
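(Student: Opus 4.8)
The plan is to prove both implications by identifying $ex(X)$ with the (unique) basis of $X$. The tools I would lean on are the ones already built up in this section: Proposition \ref{exp} (an element is extreme in $X$ iff it lies in every generator of $X$), Proposition \ref{expoint} (for violator spaces $x \in ex(X) \Leftrightarrow \tau(X) \neq \tau(X-x)$), and axiom C22; the combinatorial weight is essentially already carried by these, so the proof should be short in both directions.

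For the forward direction, I would assume $(E,\tau)$ is uniquely generated and fix $X$. Let $B$ be its unique basis; by the corollary to Proposition \ref{U1} we have $B \subseteq X$. First I would show $B \subseteq ex(X)$: if some $b \in B$ were not extreme, then by Proposition \ref{expoint} we would have $\tau(X-b) = \tau(X)$, so $X-b$ is a generator of $X$, and unique generation (Proposition \ref{U1}) forces $B \subseteq X-b$, contradicting $b \in B$. Hence every element of the basis is extreme, i.e. $B \subseteq ex(X)$. Since $ex(X) \subseteq X \subseteq \tau(X) = \tau(B)$, we obtain the chain $B \subseteq ex(X) \subseteq \tau(B)$, and C22 then yields $\tau(ex(X)) = \tau(B) = \tau(X)$.

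For the converse, I would assume $\tau(ex(X)) = \tau(X)$ for every $X$ and conclude unique generation by exhibiting $ex(X)$ as the unique basis of each $X$. Since $ex(X) \subseteq X$ by definition and, by hypothesis, $\tau(ex(X)) = \tau(X)$, the set $ex(X)$ is a generator of $X$ contained in $X$. On the other hand Proposition \ref{exp} says $ex(X)$ is contained in every generator of $X$. A generator that is contained in every generator is the minimum generator: for any basis $B'$ we get $ex(X) \subseteq B'$, while $ex(X)$ itself generates $X$, so minimality of $B'$ forces $B' = ex(X)$. Thus $X$ has exactly one basis, and as $X$ was arbitrary the space is uniquely generated.

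The only genuinely delicate point is in the forward direction. One must remember that generators need not a priori lie inside $X$, so the uniqueness hypothesis (through Proposition \ref{U1}) has to be invoked exactly at the step showing each basis element is extreme; and one must close with C22 rather than a naive appeal to monotonicity, since what is available is precisely the sandwich $B \subseteq ex(X) \subseteq \tau(B)$ that C22 is designed to handle.
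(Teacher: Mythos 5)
Your proof is correct and follows essentially the same route as the paper: both directions hinge on identifying $ex(X)$ with the unique basis, with the forward direction showing every basis element is extreme via Proposition \ref{U1} and the converse reading uniqueness off the fact that $ex(X)$ is a generator contained in every generator. The only (harmless) variation is that in the forward direction you stop at the inclusion $B \subseteq ex(X)$ and close with the sandwich $B \subseteq ex(X) \subseteq \tau(B)$ and C22, whereas the paper establishes the full equality $ex(X)=B_X$ using Proposition \ref{exp} for the reverse inclusion; both are valid.
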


\begin{proof}
$\Rightarrow$ Let $B_{X}$ be a minimal set contained in $X $ such that
$\tau(B_{X})=\tau(X)$, i.e., $B_{X}$ be a basis of $X$. Prove $ex(X)= B_{X}$.
From Preposition \ref{exp} follows that $ex(X) \subseteq B_{X}$. Suppose that
there are $x \in B_{X}$ that is not an extreme point. Then $\tau(X-x) =
\tau(X)$. Unique generation implies (Proposition \ref{U1}) $B_{X} \subseteq
X-x$, contradiction. (or - from Proposition \ref{U1} $B_{X} \subseteq
\bigcap\{B \subseteq X: \tau(B)= \tau(X)\}= ex(X)$.) Then, $\tau(X)
=\tau(B_{X})= \tau(ex(X))$.

$\Leftarrow$ If $\tau(X) = \tau(ex(X))$, then $ex(X)$ is a basis, and from
(\ref{expb}) follows that $ex(X)$ is an unique minimal basis of $X$.
\end{proof}

The theorem is not valid for convex spaces as we can see from  Example \ref{exms}:
 $3$ is not an extreme point for $\{1,2,3\}$, but $3 \in \{1,3\}$ - the basis of $\{1,2,3\}$.

 In some works \cite{VS} an element $x$ of a subset $A \subseteq E$ is defined as an extreme point of
A if and only if $\tau(A) \neq \tau(A - x)$. We denote the set of such points $EX(A)$. For violator spaces this definition is equivalent to the original definition, i.e., $ex(A)=EX(A)$,  but for convex spaces $ex(A) \subseteq EX(A)$ that follows immediately from Preposition \ref{expoint}.

The second definition of extreme points ($EX()$)
allows to prove the Krein-Milman property for convex spaces.
\begin{theorem}
\label{tex2}
Let $(E,\tau)$ be an convex space. Then $(E,\tau)$ is uniquely generated if
and only if for every set $X \subseteq E$ , $\tau(X) = \tau(EX(X))$.
\end{theorem}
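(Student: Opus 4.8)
The plan is to run the argument of Theorem~\ref{tex} again, but with $EX$ in place of $ex$, exploiting the fact that for a convex space $EX(X)$ sits inside every generator of $X$ by construction. So first I would record the following observation, using only C1 and convexity: if $x \in EX(X)$ and $B \subseteq X$ is a generator of $X$, then $x \in B$. For if $x \notin B$, then $B \subseteq X-x \subseteq X$ with $\tau(B)=\tau(X)$, and convexity would give $\tau(X-x)=\tau(B)=\tau(X)$, contradicting $x \in EX(X)$. Hence
\[
EX(X) \subseteq \bigcap\{B \subseteq X : \tau(B)=\tau(X)\};
\]
since every generator of $X$ is contained in $X$ by the standing assumption of this section, $EX(X)$ is contained in every generator, and in particular in every basis, of $X$.

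For the direction $\Rightarrow$, I would assume $(E,\tau)$ uniquely generated, let $B_X \subseteq X$ be the unique basis of $X$, and show $EX(X)=B_X$. The inclusion $EX(X) \subseteq B_X$ is the observation above applied to the generator $B_X$. For the reverse inclusion I would take $x \in B_X$ and argue that $x \in EX(X)$: otherwise $\tau(X-x)=\tau(X)$, so $X-x$ is a generator of $X$, whence Proposition~\ref{U1} forces $B_X \subseteq X-x$, contradicting $x \in B_X$. Then $EX(X)=B_X$, so $\tau(EX(X))=\tau(B_X)=\tau(X)$, as required.

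For the direction $\Leftarrow$, I would assume $\tau(X)=\tau(EX(X))$ for every $X$. Then $EX(X)$ is a generator of $X$ lying inside $X$, and by the observation it is contained in every generator of $X$. Hence any basis $B$ of $X$ satisfies $EX(X) \subseteq B$ while $EX(X)$ is itself a generator, so the minimality of $B$ gives $B=EX(X)$; thus $X$ has the single basis $EX(X)$ and the space is uniquely generated.

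The step that carries the whole proof, and the reason the theorem is phrased with $EX$ rather than with $ex$ (for which it fails, by Example~\ref{exms}), is precisely the observation above. Convex spaces lack C3 and Lemma~\ref{Lemma}, so $ex(X)$ and $EX(X)$ need not agree and $ex(X)$ can be too small to generate $X$; by contrast the change-of-closure definition $EX(X)=\{x \in X : \tau(X-x)\neq\tau(X)\}$ is exactly the condition that the convexity argument detects, which is why $EX(X)$ always lands inside every generator. Everything else is a routine reprise of the proof of Theorem~\ref{tex} together with Proposition~\ref{U1}.
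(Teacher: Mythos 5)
Your proof is correct and follows essentially the same route as the paper, whose entire proof of Theorem~\ref{tex2} is the remark that the argument for Theorem~\ref{tex} goes through verbatim with $EX$ in place of $ex$; you have simply carried out that transfer explicitly, including the one point that genuinely needs checking, namely that the relevant half of Proposition~\ref{exp} (that $EX(X)$ lies in every generator $B\subseteq X$) uses only C1 and convexity. Making that observation explicit is a useful clarification, but it is the same proof.
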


The proof of theorem \ref{tex} is correct also for theorem \ref{tex2}, since it uses only the second definition of extreme points.

\section{Hypercube partitions}

The section is based on the results proved in \cite{Clarkson}. Our approach (relation to closure operator) allows to give more simple proofs.
We also extend the result to convex spaces.

Let $E = {x_{1}, x_{2},..., x_{d}}$. Define a graph $H(E)$ as follows: the vertices are
the finite subsets of $E$, two vertices $A$ and $B$ are adjacent if and only if they differ in exactly one element, i.e., $B = A \cup x$ for some          $x \in E$.  Then $H(E)$ is \textit{the hypercube} on $E$ of dimension $d$. The hypercube can be equivalently defined as the graph on $\{0, 1\}^{d}$ in which two vertices form an edge if and only if they differ in exactly one position.

For the sets $A \subseteq B \subseteq E$, we define $[A,B] := \{C \subseteq E|A \subseteq C \subseteq B\}$
and call any such $[A,B]$ an \textit{interval}.  A \textit{hypercube partition} is a partition of $H(E)$ into disjoint intervals.

Let $(E,\tau)$ be a space. We call two sets $X$ and $Y$ \textit{equivalent} if $\tau(X)=\tau(Y)$, and let $\mathcal{P}$ be a partition of $H(E)$ into equivalence classes w.r.t.(with regard to) this relation.

\begin{proposition}
\label{HP}
Let $(E,\tau)$ be an uniquely generated violator space. Then $\mathcal{P}$  is a hypercube partition of $H(E)$.
\end{proposition}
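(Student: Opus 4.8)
The plan is to show that each equivalence class of $\mathcal{P}$ is an interval $[A,B]$. Since the classes are pairwise disjoint and cover $H(E)$ by the very definition of $\mathcal{P}$, exhibiting every class as an interval is exactly what it means for $\mathcal{P}$ to be a hypercube partition. So I would fix an arbitrary class $C$, choose any representative $X \in C$, and set $S = \tau(X)$, so that $C = \{Y \subseteq E : \tau(Y) = S\}$. The task then reduces to locating the top and bottom of the interval and verifying that $C$ is precisely everything in between.

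First I would produce the maximum element. By C3 we have $\tau(S) = \tau(\tau(X)) = \tau(X) = S$, so $S$ is closed and $S \in C$. By C1 every $Y \in C$ satisfies $Y \subseteq \tau(Y) = S$; hence $S$ is the greatest element of $C$, and I take it as the top $B = S$. Next I would produce the minimum element. Since $(E,\tau)$ is a uniquely generated violator space, and a violator space is in particular a convex space, Proposition \ref{UQP} applies and yields, in the form of equation (\ref{UQI}), the basis $A$ of $X$ as $A = \bigcap\{Y : \tau(Y) = S\} = \bigcap C$. Being a generator, $A$ satisfies $\tau(A) = \tau(X) = S$, so $A \in C$; and being the intersection of the entire class, $A \subseteq Y$ for every $Y \in C$. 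Thus $A$ is the least element of $C$, and I take it as the bottom of the interval.

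Finally I would check that $C = [A,S]$. One inclusion is immediate: every $Y \in C$ satisfies $A \subseteq Y \subseteq S$ by the two extremality facts just established, so $C \subseteq [A,S]$. For the reverse inclusion, take any $Y$ with $A \subseteq Y \subseteq S = \tau(A)$; convexity (property (\ref{convex})) then forces $\tau(Y) = \tau(A) = S$, so $Y \in C$, giving $[A,S] \subseteq C$. Hence $C = [A,S]$ is an interval, and running this argument over all classes finishes the proof.

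The only genuinely load-bearing step is the identification of the minimum element with the basis via (\ref{UQI}); the maximum comes for free from C1 and C3, and the filling-in of the interval is pure convexity. I expect no serious obstacle, but I would flag that unique generation is doing essential work here: it is precisely what guarantees a single bottom element (the basis, as the intersection of all generators). Without it a class could possess several incomparable minimal members and therefore fail to be an interval, so the hypothesis cannot be dropped.
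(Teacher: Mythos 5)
Your argument is correct and follows essentially the same route as the paper: the top of each interval is $\tau(A)$ (via C1 and C3), the bottom is the unique basis obtained as the intersection of all generators (Proposition \ref{U1} / equation (\ref{UQI})), and the interval is filled in by convexity together with C3. The only cosmetic difference is that you route the bottom element through Proposition \ref{UQP} and (\ref{UQI}) rather than citing Proposition \ref{U1} directly, but these rest on the same fact.
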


\begin{proof}
We show that each equivalent class is an interval. Let $[A]=\{X \subseteq E | \tau(X)=\tau(A)\}$. Prove that $[A]=[B_{A}, \tau(A)]$, where $B_{A}$ is a unique basis of $A$.

The inclusion $[B_{A}, \tau(A)] \subseteq [A]$ follows from C3 ($\tau(\tau(A))=\tau(A)$) and convexity.
Let $B \subseteq E$ and $\tau(B)=\tau(A)$. C1 implies $B \subseteq \tau(B)=\tau(A)$. Proposition \ref{U1} implies $B_{A} \subseteq B$. So
$B \in [B_{A}, \tau(A)]$.

\end{proof}

\begin{corollary}
Elements of equivalent classes of unique generated violator spaces are closed under intersection and under union.
\end{corollary}

This property may be obtained independently. Such closeness under intersection for unique generated convex spaces was already proved  (Proposition \ref {UQP}) in previous section.
\begin{proposition}
 Let$(E,\tau)$  be a violator space. Then
for every $X,Y\subseteq E$
\begin{equation}
\tau(X)=\tau(Y)\Rightarrow\tau(X \cup Y)=\tau(X)=\tau(Y) \label{Union}
\end{equation}

\end{proposition}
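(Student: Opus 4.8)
The plan is to reduce the claim to a single application of axiom C22, after first placing the union $X\cup Y$ between $X$ and its closure $\tau(X)$. Assume the hypothesis $\tau(X)=\tau(Y)$. The first step is to use C1 twice: it gives $X\subseteq\tau(X)$ and $Y\subseteq\tau(Y)$, and combining the latter with $\tau(Y)=\tau(X)$ yields $Y\subseteq\tau(X)$ as well. Hence $X\cup Y\subseteq\tau(X)$, while $X\subseteq X\cup Y$ is trivial. So I obtain the chain $X\subseteq X\cup Y\subseteq\tau(X)$.

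With this chain established, I would invoke C22 with $F=X$ and $G=X\cup Y$: since $X\subseteq X\cup Y\subseteq\tau(X)$, C22 gives $\tau(X\cup Y)=\tau(X)$. Because $\tau(X)=\tau(Y)$ by assumption, this immediately yields $\tau(X\cup Y)=\tau(X)=\tau(Y)$, which is exactly (\ref{Union}).

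I do not anticipate a serious obstacle here; the only point requiring care is the observation that C1 forces each of $X$ and $Y$ into the common closure $\tau(X)=\tau(Y)$, so that their union still lies below that closure and C22 applies verbatim. This argument is the union-analogue of the intersection property of Proposition \ref{UQP}, but note that no unique-generation hypothesis is needed: the axioms C1 and C22 (equivalently, convexity together with C3) already control every set sandwiched between a set and its closure, and that is all the statement asks about. In particular the proof uses only the \emph{weak} closure structure of a violator space, not C2.
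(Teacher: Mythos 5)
Your proof is correct and is essentially identical to the paper's: both place $X\cup Y$ in the chain $X\subseteq X\cup Y\subseteq\tau(X)$ via C1 and the hypothesis $\tau(Y)=\tau(X)$, then apply C22 once to conclude $\tau(X\cup Y)=\tau(X)$. No further comment is needed.
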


\begin{proof}
From C1 follows that $X \subseteq \tau(X)$ and $Y \subseteq \tau(Y)=\tau(X)$. Then $ X \subseteq X \cup Y \subseteq \tau(X)$, that implies (by C22)
$\tau(X \cup Y)=\tau(X)$.
\end{proof}

This property is not necessary for convex spaces.

\begin{example}
Let $H=\{1,2,3,4\}$. Define $\tau(X)=X$ for each $X \subseteq H$ except
$\tau(\{1\})=\tau(\{1,2\})=\tau(\{1,3\})=\{1,2,3\}$, and $\tau(\{1,2,3\})=\{1,2,3,4\}$. It's easy to check that the space
$(H,\tau)$ is uniquely generated and satisfies the properties C1 and convexity. But $\tau(\{1,2\} \cup \{1,3\}) \neq \tau\{1,2\}$.
It easy to see that $[\{1\}]$ is not an interval.
\end{example}

The property (\ref{Union}) is equivalent to existence of unique maximal generator for each set.
Denote by $G_{Max}(A)$ - unique maximal generator of $A$.

\begin{proposition}
Let $(E,\tau)$ be a uniquely generated convex space with unique maximal generators. Then $\mathcal{P}$  is a hypercube partition of $H(E)$.
\end{proposition}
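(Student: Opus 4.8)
The plan is to mirror the proof of Proposition \ref{HP}, showing that every $\tau$-equivalence class is an interval, but replacing the top endpoint $\tau(A)$ (which was a generator of $A$ only thanks to C3) by the unique maximal generator $G_{Max}(A)$. For a set $A$ write $[A] = \{X \subseteq E : \tau(X) = \tau(A)\}$ for its equivalence class, let $B_A$ be the unique basis of $A$, and I claim $[A] = [B_A, G_{Max}(A)]$. Both endpoints lie in $[A]$: $\tau(B_A) = \tau(A)$ by definition of a basis, and $\tau(G_{Max}(A)) = \tau(A)$ because $G_{Max}(A)$ is a generator of $A$.

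First I would prove $[A] \subseteq [B_A, G_{Max}(A)]$. Take $B$ with $\tau(B) = \tau(A)$, so $B$ is a generator of $A$. Proposition \ref{U1} (unique generation) gives $B_A \subseteq B$. For the upper bound I use that the hypothesis of unique maximal generators is equivalent to property (\ref{Union}), i.e.\ each equivalence class is closed under union: then $B \cup G_{Max}(A)$ is again a generator of $A$ that contains $G_{Max}(A)$, and maximality forces $B \cup G_{Max}(A) = G_{Max}(A)$, whence $B \subseteq G_{Max}(A)$. Therefore $B \in [B_A, G_{Max}(A)]$.

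For the reverse inclusion $[B_A, G_{Max}(A)] \subseteq [A]$, take any $C$ with $B_A \subseteq C \subseteq G_{Max}(A)$. Since $\tau(B_A) = \tau(A) = \tau(G_{Max}(A))$, convexity applied along $B_A \subseteq C \subseteq G_{Max}(A)$ yields $\tau(C) = \tau(A)$, so $C \in [A]$. Hence $[A]$ is exactly the interval $[B_A, G_{Max}(A)]$; as the classes of an equivalence relation they are disjoint and cover $H(E)$, so $\mathcal{P}$ is a hypercube partition.

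The step I expect to be the crux is the upper bound $B \subseteq G_{Max}(A)$ for an arbitrary generator $B$. In the violator-space setting C3 makes $\tau(A)$ itself a generator that visibly contains every generator, but here $\tau(A)$ need not be a generator, and ``maximal'' alone would not guarantee that $G_{Max}(A)$ dominates every generator. It is precisely the union-closure coming from the unique-maximal-generator hypothesis that upgrades $G_{Max}(A)$ from a maximal element to the greatest element of the class; the remaining inclusions are the same convexity argument as in Proposition \ref{HP}.
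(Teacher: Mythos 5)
Your proof is correct and follows essentially the same route as the paper's: both show each equivalence class equals the interval $[B_A, G_{Max}(A)]$, using Proposition \ref{U1} for the lower bound and convexity for the reverse inclusion. The only difference is that you explicitly justify $B \subseteq G_{Max}(A)$ via the union-closure property (\ref{Union}), a step the paper asserts directly from the existence of a unique maximal generator; your elaboration is a welcome clarification but not a different argument.
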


\begin{proof}
We show that each equivalent class is an interval. Let $[A]=\{X \subseteq E | \tau(X)=\tau(A)\}$. Prove that $[A]=[B_{A}, G_{Max}(A)]$, where $B_{A}$ is a unique basis of $A$ and $G_{Max}(A)$ is a unique maximal generator of $A$..

The inclusion $[B_{A}, G_{Max}(A)] \subseteq [A]$ follows from convexity.
Let $B \subseteq E$ and $\tau(B)=\tau(A)$. Existence of unique maximal generator implies $B \subseteq G_{Max}(A)$. Proposition \ref{U1} implies $B_{A} \subseteq B$. So $B \in [B_{A}, \tau(A)]$.

\end{proof}

Example \ref{exms} shows that there exist uniquely generated convex spaces with unique maximal generators that are not violator space.
So a hypercube partition may be obtained as a partition of $H(E)$ into equivalence classes not only by a violator space. Moreover, the same partition may be obtained by different type of spaces.

\begin{theorem}\cite{Clarkson}
Every hypercube partition P is a partition of $H(E)$ into equivalence classes of an uniquely generated violator space.
\end{theorem}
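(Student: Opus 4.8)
The plan is to reconstruct the operator directly from the partition and then verify the three violator axioms together with unique generation. Write the given partition as $P=\{[A_i,B_i]\}_{i}$, a decomposition of $H(E)$ into pairwise disjoint intervals. For each $X\subseteq E$, let $[A,B]$ be the unique interval of $P$ containing $X$ and define $\tau(X):=B$, the top of that interval. This is well defined because $P$ is a partition, and I would argue that $(E,\tau)$ is a uniquely generated violator space whose equivalence classes w.r.t.\ the relation $\tau(X)=\tau(Y)$ are exactly the intervals of $P$.

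The first step is to show that the equivalence classes coincide with the intervals. Since $A\subseteq B\subseteq B$, the top $B$ of every interval $[A,B]$ lies in that very interval, so $\tau(B)=B$; as the intervals are pairwise disjoint, distinct intervals carry distinct tops. Consequently $\tau(X)=\tau(Y)$ holds precisely when $X$ and $Y$ lie in a common interval, so the equivalence classes of $\tau$ are the members of $P$. The second step is to check the axioms using the equivalent characterization of violator spaces by C1, C22, and C3. For C1, $X\in[A,B]$ gives $X\subseteq B=\tau(X)$. For C3, because $B\in[A,B]$ we get $\tau(\tau(X))=\tau(B)=B=\tau(X)$. For C22, suppose $F\in[A,B]$ and $F\subseteq G\subseteq\tau(F)=B$; then $A\subseteq F\subseteq G\subseteq B$ forces $G\in[A,B]$, whence $\tau(G)=B=\tau(F)$. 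Thus $(E,\tau)$ is a violator space.

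The final step is unique generation. The generators of $X$ are by definition all $Y$ with $\tau(Y)=\tau(X)$, i.e.\ exactly the members of the equivalence class of $X$, which we have identified with the interval $[A,B]$. Since $[A,B]=\{C:A\subseteq C\subseteq B\}$ has $A$ as its unique inclusion-minimal element, $A$ is the unique basis of $X$, so $(E,\tau)$ is uniquely generated (and, as a sanity check, this is consistent with Proposition \ref{U1}, $A$ being contained in every generator). Combining the three steps yields the claim.

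I expect the only genuinely delicate point to be the choice of $\tau$ itself rather than the subsequent verifications: assigning to each $X$ the \emph{top} of its interval is what makes C1 and C3 work, whereas the bottom would fail C1. A second subtlety worth flagging in the writeup is that bases and generators are not required to be subsets; this is handled precisely because the whole equivalence class is an interval, so all generators automatically lie between $A$ and $B$ and the minimal one is forced to be the bottom $A$.
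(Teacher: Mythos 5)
Your proposal is correct and follows essentially the same route as the paper: define $\tau(X)$ as the top of the interval containing $X$ and verify C1, C22 (and C3). The only differences are cosmetic and in your favor --- you make explicit that distinct intervals have distinct tops (so equivalence classes really are the intervals), and you conclude unique generation directly from the fact that the bottom $A$ is the unique inclusion-minimal element of $[A,B]$, whereas the paper routes this last step through closure under intersection and Proposition \ref{UQP}.
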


\begin{proof}
For each $X \subseteq E$ there is an interval $[A,B]$ containing $X$. Then define $\tau(X)=B$. Prove that $(E,\tau)$ is a violator space, i.e., $\tau$ satisfies the closure axioms:C1,C22. C1 follows from $X \subseteq B=\tau(X)$.

If $X \subseteq Y  \subseteq \tau(X)$, then $Y$ belongs to interval $[A,B]$ containing $X$, and so $\tau(Y)=\tau(X)$.
To prove unique generation note that $\tau(X)=\tau(Y)$ means $X,Y \in [A,B]$. Then $\tau(X \cap Y) = \tau(X)$, and from Proposition 2.5 immediately follows that the violator space is uniquely generated.
\end{proof}

\end{document}